\begin{document}
\newcommand{\A}{{\mathbb A}}
\newcommand{\B}{{\mathbb B}}
\newcommand{\C}{{\mathbb C}}
\newcommand{\N}{{\mathbb N}}
\newcommand{\Q}{{\mathbb Q}}
\newcommand{\Z}{{\mathbb Z}}
\renewcommand{\P}{{\mathbb P}}
\renewcommand{\O}{{\mathcal O}}
\newcommand{\cF}{{\mathcal F}}
\newcommand{\R}{{\mathbb R}}
\newcommand{\rc}{\subset}
\newcommand{\rank}{\mathop{rank}}
\newcommand{\trace}{\mathop{tr}}
\newcommand{\dimc}{\mathop{dim}_{\C}}
\newcommand{\Lie}{\mathop{Lie}}
\newcommand{\Spec}{\mathop{Spec}}
\newcommand{\Auto}{\mathop{{\rm Aut}_{\mathcal O}}}
\newcommand{\alg}[1]{{\mathbf #1}}
\newtheorem{lemma}{Lemma}[section]
\newtheorem*{definition}{Definition}
\newtheorem*{claim}{Claim}
\newtheorem{corollary}{Corollary}
\newtheorem*{Conjecture}{Conjecture}
\newtheorem*{SpecAss}{Special Assumptions}
\newtheorem{example}{Example}
\newtheorem*{remark}{Remark}
\newtheorem*{observation}{Observation}
\newtheorem*{fact}{Fact}
\newtheorem*{remarks}{Remarks}
\newtheorem{proposition}[lemma]{Proposition}
\newtheorem{theorem}[lemma]{Theorem}
\numberwithin{equation}{section}
\def\labelenumi{\rm(\roman{enumi})}
\title[Rational Connectedness and order of maps]{%
Rational connectedness and order of non-degenerate meromorphic maps from $\C^n$
}
\author {Fr\'ed\'eric Campana \& J\"org Winkelmann}
\begin{abstract}
We show that an $n$-dimensional compact K\"ahler manifold $X$ admitting a 
non-degenerate meromorphic map $f:\C^n\to X$ of order $\rho_f<2$ is rationally
connected.
\end{abstract}
\subjclass{}%
%

\address{%
Fr\'ed\'eric Campana\\
Institut Elie Cartan BP239\\
Universit\'e de Lorraine\\
54506. Vandoeuvre-les-nancy Cedex. France\\
et: Institut Universitaire de France.
}

\address{%
J\"org Winkelmann \\
Lehrstuhl Analysis II \\
Mathematisches Institut \\
NA 4/73\\
Ruhr-Universit\"at Bochum\\
44780 Bochum \\
Germany
}
\thanks{
{\em Acknowledgement.}
The second author was
supported by the
SFB/TR 12 ``Symmetries and Universality in mesoscopic systems''.
}
\maketitle
\section{Introduction/Summary}
The purpose of this paper is the following result.

\begin{theorem}
Let $X$ be a compact K\"ahler manifold of fixed dimension $n$. Let $f:\C^n\dasharrow X$ be a non-degenerate meromorphic map of order $\rho_f<2$ (see the next section for the definition of $\rho_f$).

Then $X$ is rationally connected, hence projective.
\end{theorem}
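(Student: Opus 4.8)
The plan is to argue by contradiction, combining the maximal rationally connected (MRC) fibration with a lower bound on the order of dominant maps into non-uniruled targets. Suppose $X$ is not rationally connected and let $\pi:X\dashrightarrow Z$ be its MRC fibration. Then $\dim Z\geq 1$, and by the theorem of Graber--Harris--Starr (and its extension to the compact K\"ahler setting) the base $Z$ is not uniruled. By the K\"ahler version of the Boucksom--Demailly--Paun--Peternell theorem, non-uniruledness forces $K_Z$ to be pseudo-effective, so $K_Z$ carries a singular Hermitian metric $e^{-\varphi}$ whose curvature current $\Theta\geq 0$ is positive.

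First I would transport $f$ to $Z$. Since $f$ is non-degenerate, hence dominant, its image is not contained in the indeterminacy locus of $\pi$, so after a modification the composition $g=\pi\circ f:\C^n\dashrightarrow Z$ is a well-defined dominant meromorphic map. Functoriality of the Nevanlinna characteristic gives $T_g(r,\omega_Z)\leq C\,T_f(r,\omega_X)+O(\log r)$, whence $\rho_g\leq\rho_f<2$. Restricting $g$ to a general affine subspace $\C^m\subset\C^n$ with $m=\dim Z$ keeps it dominant and generically finite and does not increase the order, so it suffices to contradict the following estimate applied to $h=g|_{\C^m}$.

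The key step is the \emph{Key Proposition}: an equidimensional dominant meromorphic map $h:\C^m\dashrightarrow Z$ into a compact K\"ahler manifold with $K_Z$ pseudo-effective satisfies $\rho_h\geq 2$. To prove it I would feed the positivity of $\Theta$ into value-distribution theory. The Jacobian $J_h$ is a holomorphic section of $h^*K_Z$ (as $K_{\C^m}$ is trivial), vanishing exactly on the ramification divisor $R_h$. Pulling back the metric and applying the Poincar\'e--Lelong and Jensen formulae on balls expresses $T_h(r,K_Z)$ as $N(r,R_h)$ plus a spherical-mean boundary term in the weight $h^*\varphi-\log|J_h|^2$, up to $O(1)$; here $N(r,R_h)\geq 0$ and, crucially, $T_h(r,K_Z)\geq 0$ because $h^*\Theta\wedge\beta^{m-1}\geq 0$. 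The threshold $2$ then enters through the quadratic growth of the potential $|z|^2$ of the standard form $\beta$ on $\C^m$: accommodating a dominant image in a compact target with pseudo-effective canonical bundle forces the mean of the metric weight to grow at least like $r^2$. Two cases organize the argument. If $q(Z)>0$, composing with the Albanese map gives a non-constant map $\C^m\to\mathrm{Alb}(Z)$ into a complex torus, whose order is $\geq 2$ by the Gaussian growth of theta functions. The remaining pseudo-effective case with $q(Z)=0$ is handled by working directly with the positive current $h^*\Theta$, or with pluricanonical sections when $\kappa(Z)\geq 0$.

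I expect the main obstacle to be exactly this last point: deriving the sharp bound $\rho_h\geq 2$ from pseudo-effectivity alone, without invoking abundance, when $K_Z$ is pseudo-effective but not big and $q(Z)=0$ (the model case being a K3 or Calabi--Yau target). One must extract quantitative growth from a merely singular metric on $K_Z$ and control the error terms of the logarithmic-derivative lemma uniformly in $r$ outside a set of finite measure. Once the Key Proposition is established, the contradiction $2\leq\rho_h\leq\rho_f<2$ shows that $X$ is rationally connected; it is then projective, since a rationally connected compact K\"ahler manifold has $h^{2,0}(X)=0$ and hence carries an integral K\"ahler class by Kodaira's criterion.
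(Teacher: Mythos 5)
Your reduction (MRC fibration, Graber--Harris--Starr for the K\"ahler case, BDPP to get $K_Z$ pseudo-effective) matches the geometric input the paper uses, but the analytic heart of your argument --- the ``Key Proposition'' that an equidimensional dominant meromorphic map $h:\C^m\dasharrow Z$ with $K_Z$ pseudo-effective has $\rho_h\ge 2$ --- is precisely the theorem of Campana--P\u aun that this paper cites as \cite{CP} and is explicitly written to \emph{avoid} reproving by deep methods. And your sketch does not prove it: you concede that the case $q(Z)=0$ with $K_Z$ pseudo-effective but not big (K3/Calabi--Yau type bases) is open in your outline, and the tools you name there do not close it. The Poincar\'e--Lelong/Jensen bookkeeping for the Jacobian yields an \emph{identity} relating $T_h(r,K_Z)$, $N(r,R_h)$ and a boundary mean; since the curvature current of a merely pseudo-effective $K_Z$ can be degenerate, $T_h(r,K_Z)$ itself may stay bounded, and no $r^2$ lower bound on $T_h(r,\omega)$ for a K\"ahler form $\omega$ falls out of this. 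The Albanese case split is a detour, not a proof of the remaining case. There is also a sign error that turns out to be the crux: $\det Dh$ is a section of $h^*K_Z^{-1}\otimes K_{\C^m}\cong h^*K_Z^{-1}$, not of $h^*K_Z$, i.e.\ it lives in the \emph{dual} of the positively curved bundle.

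Getting that sign right is exactly how the gap is closed, and it is what the paper does. If $g_0$ is a singular metric on $K_Z$ with $\Theta_{g_0}\ge 0$, then the log-norm of a holomorphic section of the dual bundle, measured in the dual metric, is plurisubharmonic; so $\log\|\det Dh\|$ (equivalently, in the paper's formulation, $\log\|\beta\|_{g_0}$ for $\beta:f^*F\to\Omega^p_{\C^n}$ with $\cF\subset\Omega^p_X$ the pseudo-effective invertible subsheaf $\rho^*(K_Y)$ coming from the MRC base) is psh, extends across the indeterminacy set, and the sub-mean-value inequality at a point of non-degeneracy gives a uniform lower bound for its ball averages. Combined with $f^*\omega\wedge\alpha^{n-1}\ge C\|Df\|^2\alpha^n$ this forces $T_f(r)\ge c\,r^2+O(1)$, hence $\rho_f\ge 2$, with no case division on $q(Z)$, no second main theorem, and no lemma on logarithmic derivatives. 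Working with the subsheaf of $\Omega^p_X$ on $X$ itself also removes your restriction to a generic $\C^m\subset\C^n$, a step whose claimed order-preservation is not automatic for a single fixed subspace (Crofton controls only the average over subspaces). As written, your proposal reduces the theorem to an unproved (and, in your own assessment, hard) statement, so it does not constitute a proof.
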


This result belongs to a series of similar statements relating the existence and growth 
of maps from
$\C^n$ to algebro-geometric properties of the target space $X$.

These statements are better expressed by introducing the following invariant $\rho(X)$, which suggests many questions, some of which are raised in the last section: 

\begin{definition} Let $X$ be an $n$-dimension connected compact complex manifold. 
Define $\rho(X):=\inf \{\rho_f\vert f:\Bbb C^m\dasharrow X, m\geq n\}$, $f$
non-degenerate.

It is understood that $\rho(X)\in [0,+\infty]$, and that $\rho_f=+\infty$ if there exists no non-degenerate meromorphic map $f:\Bbb C^n\to X$.
\end{definition}

The invariant $\rho$ is easily seen to be bimeromorphic, preserved by finite \'etale covers, and `increasing' (ie: $\rho(X)\geq \rho(Y)$ if there exists a dominant meromorphic map $g:X\to Y$).

It is obvious that $\rho(X)=0$ if $X$ is unirational.

Kobayashi and Ochiai proved that the existence of a non-degenerate map
from $\C^n$ to a projective manifold $X$ implies that $X$ is not
of general type (\cite{KO}). 
Thus $\rho (X)=+\infty$ if $X$ is of general type. 
It is proved in \cite{Ca04}, more generally, that $X$ is `special' 
if there exists a non-degenerate meromorphic map
$f:\C^n\dasharrow X$. In particular, $X$ must be special if
$\rho(X)<+\infty$.

If $X$ is K\"ahler and $K_X$ is pseudo-effective of numerical dimension $\nu\in \{0,1,\dots, n\}$, then $\rho(X)\geq 2(1-\frac{\nu}{n})^{-1}\geq 2$ (\cite{CP}). This implies the previous result of Kobayashi-Ochiai (since $X$ is of general type if and only if $\nu=n$). Using \cite{BDPP}, it also implies that if $X$ is projective, then $X$ is uniruled if $\rho(X)<2$

If $h^0(X,Sym^k(\Omega^p_X))\neq 0$, for some $k,p>0$, then $\rho(X)\geq 2$ (\cite{NW}). The K\"ahler condition is not required here. 
If, in addition, $X$ is assumed
to be a K\"ahler surface, then the condition
$h^0(X,Sym^k(\Omega^p_X))=0\ \forall p,k$ implies that $X$ is K\"ahler.
Thus, if $X$ is a K\"ahler surface and if $\rho(X)<2$, then $X$ 
must be rational (\cite{NW}). On the other hand, some Hopf (thus non-rational, non-K\"ahler) surfaces have $\rho(X)\leq 1$ (\cite{NW}).

The present result generalizes these two results from \cite{CP}, \cite{NW}, avoiding the deep methods of \cite{CP}. The estimate $\rho_f<2$ is optimal, 
since for an Abelian variety $A$ we have $\rho_\tau=2$
for the universal covering map $\tau:\C^g\to A$.

All these results provide lower bounds for $\rho(X)$ deduced from the geometry of $X$. Producing upper bounds for $\rho(X)$ (ie: the existence of non degenerate $f's$) turns out to be a completely open topic, except in the trivial case of $X$ unirational or a torus.

The case of rationally connected vs unirational manifolds when $n\geq 3$ is of great interest. For example: what is $\rho(X)$ if $X$ is a `general' smooth quartic in $\Bbb P^4$? If $\rho(X)>0$, then $X$ is not unirational. More generally: are there projective (necessarily rationally connected) $X$ such that $\rho(X)\in ]0,2[$? See the last section for some more questions.

\section{Characteristic function and order of a non-degenerate meromorphic map}
We start with some preparations.
Let $X$ be a
 compact complex manifold and let $f:\C^n\dasharrow X$
be a meromorphic map. $f$ is said to be 
(differentiably) ``non-degenerate''
if there is a point $p\in\C^n$ such that $f$ is holomorphic at $p$
and such that  $(Df)_p:T_p\C^n\to T_{f(p)}X$ is surjective.

Let $\alpha:=dd^c||z||^2$ on $\C^n$, and let $\omega$
be a positive $(1,1)$-form on $X$.
The characteristic function of $f$ is defined as:
\[
T_f(r;\omega)=\int^r_1 \frac{dt}{t^{2n-1}}\int_{B_t}(f^*\omega)\wedge\alpha^{n-1}.
\]
Here $B_t=\{z\in\C^n:|z||<t\}$.
Observe that the integral over $B_t$ 
is well-defined even if $f$ is only
meromorphic, not necessarily holomorphic. (To see this, note that locally
$\omega$ can be
dominated by a sum $\sum_i\alpha_i\wedge\bar\alpha_i$ where the
$\alpha_i$ are holomorphic $1$-forms. The holomorphy of the $\alpha_i$ implies
that $\sum_i\alpha_i\wedge\bar\alpha_i$ extends real-analytically through the indeterminacy
set of $f$.)

If $\omega$ and $\tilde\omega$ are any two positive $(1,1)$-forms
on $X$, then (by the compactness of $X$)
there are constants $C_1,C_1>0$ such that
\begin{equation}
C_1\omega <\tilde\omega< C_2\omega \label{compare-forms}
\end{equation}
and consequently
\[
C_1T_f(r,\omega) <T_f(r,\tilde\omega)< C_2T_f(r,\omega)\ \ \forall r>1.
\]

The ``order'' $\rho_f$ is defined as
\[
\rho_f=\limsup_{r\to\infty} \frac{\log T_f(r,\omega)}{\log r}
\]
where $\omega$ is a positive $(1,1)$-form on $X$.
Due to the inequalities \ref{compare-forms} this number
$\rho_f\in\R\cup\{+\infty\}$
is independent of the choice of the positive $(1,1)$-form $\omega$
used in the definition of $T_f(r,\omega)$.

The well-known
 Crofton's formula (see e.g.~\cite{G}) implies that $T_f(r)$ equals
the average of $T_{f|L}(r)$ taken over all
complex lines $L\subset \C^n$. This permits to extend many properties of
characteristic functions from the case of entire curves to the higher-dimensional domains.

For instance, let $\tau:X'\to X$ be a bimeromorphic holomorphic map between
compact complex manifolds.
Then $\tilde f=\tau^{-1}\circ f:\C^n\to X'$ is again a
non-degenerate meromorphic map, and $\rho_f=\rho_{\tilde f}$.

\label{?} Now let $\alpha:X\to Y$ be a dominant holomorphic map.
Then it follows directly from the definitions that $T_{\alpha\circ f}(r)
\le T_f(r)$ (if $\omega_X\ge \alpha^*(\omega_Y)$) and consequently $\rho_{\alpha \circ f}\le\rho_f$.

The order is easily seen to behave nicely with respect to products.
Let $X=X_1\times X_2$, let $\omega_i$ be positive $(1,1)$-forms in $X_i$
and let $f_i:\C^{n_1}\to X_i$ be non-degenerate meromorphic maps.
Define $\omega=\pi_1^*\omega_1+\pi_2^*\omega_2$, $n=n_1+n_2$ and
$f:\C^n\to X$ as $f(v_1,v_2)=(f_1(v_1),f_2(v_2))$.

Then $T_f(r)=T_{f_1}(r)+T_{f_2}(r)$. This implies
$\rho_f=\max\{\rho_{f_1},\rho_{f_2}\}$, because for any $t_1,t_2>0$ we
have
\[
\max_i\log t_i\le\log(t_1+t_2)\le \log(2\max_i t_i)=\log 2 +\log \max_i t_i.
\]

\section{Rational connectedness}

As usual, a compact complex manifold $X$
(usually K\"ahler) is called ``rationally connected''
(short: RC) if every two points can be linked by a (possibly singular) irreducible rational curve.
(Equivalently in the K\"ahler case: can be linked by a chain of rational curves).
Unirational manifolds are RC.
There are unirational threefolds which are not rational,
e.g.~smooth cubic hypersurfaces in $\P_4$.
However, it is not known whether there exist RC
manifolds which are not unirational, although it is expected that these should exist (the case of `general' quartics in $\P_4$ being one of the first open cases) .

Rationally connected compact K\"ahler manifolds are projective (since $h^{2,0}=0$, using Kodaira's criterion).

Let $X$ be a compact connected K\"ahler manifold.
Then there exists an ``almost holomorphic'' rational dominant map
$\rho:X\dasharrow Y$ (called the `` $RC$-reduction", or ``rational quotient"
\cite{C}, or the ``MRC-fibration"
\cite{KMM} if $X$ is projective), such that the fibers are
RC, and maximum with this property.

When $X$ is projective, it is known (by \cite{GHS}) that the base $Y$ is not uniruled. In fact, \cite{GHS} shows that if $f:X\dasharrow Y$ is a surjective meromorphic map with fibres and base $Y$ which are both RC, then $X$ is RC if it is compact K\"ahler (remark first that $h^{2,0}(X)=0$, and that $X$ is thus projective). The base $Y$ of the RC-reduction $\rho:X\dasharrow Y$  is not uniruled also when $X$ is compact K\"ahler.
Let indeed $r:Y\dasharrow Z$ be the RC-reduction of $Y$. The fibres
of $r\circ \rho:X\dasharrow Z$ are thus RC, and so $Y=Z$, which means that $Y$ is not uniruled.

 Due to \cite{BDPP} a projective manifold is uniruled if and only if $K_X$
is not pseudoeffective. Based on this, another recent criterion is the following
(see \cite{CDP}):

Let $X$ be a compact K\"ahler manifold.
Then $X$ is rationally connected if and only if there
is no pseudoeffective invertible subsheaf $F\subset\Omega^p_X$.
(for some $p\in\N$).

The proof consists in observing that 
$X$ is not RC precisely if its RC-reduction $\rho:X\dasharrow Y$ has $dim(Y)=p>0$. Define then $F:=\rho^*(K_Y)$, which is pseudo-effective since $Y$ is not uniruled.

It is conjectured that a compact K\"ahler (or equivalently: projective) manifold $X$ is RC if (and only if) there is no $\Q$-effective (instead of pseudo-effective) invertible subsheaf $F\subset\Omega^p_X$ (for some $p\in\N$). By means of the RC-reduction as above, this conjecture is equivalent to the `non-vanishing conjecture', claiming that if $K_X$ is pseudo-effective, it is $\Q$-effective.
 
\section{Pseudoeffective Line bundles}

A singular hermitian metric $h$ on a complex line bundle is
given in the form $|\ |_{h}=e^{-\phi}|\ |_s$ where $s$ is the standard
 metric for some local holomorphic trivialization
and $\phi$ is a $L^1_{loc}$-function.

The $L^1_{loc}$-condition ensures that the curvature $\Theta=
dd^c\log(e^{-\phi}) = -dd^c\phi$ makes sense (in the sense of currents)
and represents the Chern class of the line bundle.

A line bundle $L$ on a compact complex
manifold is called ``pseudoeffective'' iff  there is a singular
hermitian metric $h$ with
semipositive curvature $\Theta_h\ge 0$.
This condition means that the metric is locally given via a
weight function $e^{-\phi}$ with $\phi$ plurisubharmonic.
If $s$ is a holomorphic section in $F$, this implies
that $-\log||s||_h$ is plurisubharmonic.

\section{Measuring the derivative}
Let $V,W$ be complex vector spaces equipped with hermitian inner
products.
The norm $||F||$ of a complex linear map $F:V\to W$ is defined
by: $||F||^2=trace(F^*\circ F)$ where $F^*$ denotes the adjoint of $F$.
 If $A$ is the matrix describing $F$
with respect to orthonormal bases on $V$ and $W$, then
\[
||F||^2 = \sum_{i,j} |A_{ij}|^2.
\]

We continue with a local observation. Let $U,V$ be open subsets in $\C^n$,
let $\alpha = dd^c||z||^2 = \sum_ j i\cdot dz_j\wedge d\bar z_j$ be the
K\"ahler form for the euclidean metric and let $f:U\to V$ be a
holomorphic map.

Then
\[
f^*\alpha \wedge \alpha^{n-1} =\frac 1n ||Df||^2 \alpha^n
\]
where
\[
||Df||^2 =
trace\left((Df)^*\circ (Df)\right)
= \sum_{j,k}\left| \frac{\partial f_j}{\partial z_k} \right|^2
\]

\begin{proposition}\label{prop-A}
Let $U\subset\C^n$ be an open subset, let $X$ be a compact K\"ahler manifold
equipped with a hermitian metric $h$ and a positive $(1,1)$-form $\omega$ and
let $f:U\to X$ be a holomorphic map.

Then there is a constant $C>0$ such that
\[
f^*\omega\wedge\alpha^{n-1}\ge C ||Df||^2\alpha^n.
\]
Here $||Df||$ is calculated with respect to $h$.
\end{proposition}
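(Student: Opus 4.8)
The plan is to reduce the inequality to the local, coordinate computation already recorded just before the proposition, namely that for the euclidean form $\alpha$ one has $f^*\alpha\wedge\alpha^{n-1}=\frac1n||Df||^2\alpha^n$. The key point is that $\omega$ is a positive $(1,1)$-form on the compact manifold $X$, so it is comparable to the fixed hermitian metric $h$ used to measure $Df$.

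First I would fix an arbitrary point $p\in U$ and work in the fibre of the tangent spaces over $p$ and $f(p)$. Since $X$ is compact and $\omega$ is a positive $(1,1)$-form, the form $\omega$ and the $(1,1)$-form $\omega_h$ associated with the hermitian metric $h$ are two positive forms on $X$; by the same compactness argument recorded in inequality \ref{compare-forms}, there is a constant $C>0$, independent of the point, with $\omega\ge C\,\omega_h$ everywhere on $X$. Pulling back by $f$ and wedging with the nonnegative form $\alpha^{n-1}$ preserves the inequality, so $f^*\omega\wedge\alpha^{n-1}\ge C\,f^*\omega_h\wedge\alpha^{n-1}$.

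Next I would evaluate the right-hand side. The norm $||Df_p||$ in the statement is by definition $\trace\bigl((Df_p)^*\circ(Df_p)\bigr)$ computed with the euclidean metric on $T_p\C^n$ and the metric $h$ on $T_{f(p)}X$. The identity $f^*\omega_h\wedge\alpha^{n-1}=\frac1n||Df||^2\alpha^n$ is exactly the local computation preceding the proposition, now carried out with $\omega_h$ (the fundamental form of $h$) in place of the flat $\alpha$ on the target: both sides are pointwise multilinear expressions in the entries of the Jacobian of $f$, and choosing unitary frames for $T_p\C^n$ and for $(T_{f(p)}X,h)$ diagonalises the pairing and reproduces the trace formula. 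Combining this with the previous step yields $f^*\omega\wedge\alpha^{n-1}\ge \frac{C}{n}||Df||^2\alpha^n$ at $p$, and since $p$ was arbitrary and $C$ is uniform, absorbing the factor $1/n$ into the constant gives the claim.

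The only genuine subtlety, and the step I would be most careful about, is making the comparison $\omega\ge C\,\omega_h$ and the pointwise identity mesh correctly: one must check that wedging the inequality of positive $(1,1)$-forms with $\alpha^{n-1}$ really preserves the sign, i.e.\ that for positive $(1,1)$-forms $\beta\ge\gamma$ on a domain in $\C^n$ one has $\beta\wedge\alpha^{n-1}\ge\gamma\wedge\alpha^{n-1}$ as top-degree forms. This is elementary but should be stated, since it is where positivity of $\omega$ (rather than mere closedness) is used. Everything else is the routine linear algebra of the trace norm and the fact that $\alpha^{n-1}$ is a nonnegative $(n-1,n-1)$-form, so I would not grind through those calculations.
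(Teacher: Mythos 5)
Your proof is correct, but it is organized differently from the paper's. The paper keeps the local observation literally as stated (flat euclidean metric on both source and target): it covers $X$ by finitely many charts $V_k$ embedded in $\C^n$, compares $h$ with the induced euclidean metrics $h_k$ and $\omega$ with the pulled-back euclidean forms $j_k^*\alpha$ on a shrunken cover $W_k$ (two constants $C_1,C_2$, uniform by compactness), and then applies the flat identity chart by chart. You instead upgrade the local observation to the exact pointwise identity $f^*\omega_h\wedge\alpha^{n-1}=\frac1n\|Df\|_h^2\,\alpha^n$ for the fundamental form $\omega_h$ of $h$ itself --- which is legitimate, since both sides at a point $p$ depend only on $(Df)_p$ and on $h$ at $f(p)$, so a unitary frame for $(T_{f(p)}X,h)$ reduces it to the flat computation --- and then you need only the single global comparison $\omega\ge C\,\omega_h$ plus the (correctly flagged) fact that wedging an inequality of positive $(1,1)$-forms with $\alpha^{n-1}$ preserves the sign, because $\beta\wedge\alpha^{n-1}=\frac1n(\mathrm{tr}_\alpha\beta)\,\alpha^n$ and the trace of a positive semidefinite form is nonnegative. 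Your route avoids the covering and embedding apparatus entirely and yields an identity rather than an inequality for the reference form; the paper's route avoids having to restate the trace computation for a non-flat target metric. Both rest on the same two ingredients (the trace identity and compactness of $X$), so this is a genuine but modest simplification rather than a new idea.
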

\begin{proof}
We cover $X$ with finitely many open subsets $V_k$ such that each $V_k$
admits an embedding $j_k:V_k\to \C^n$ and each $V_k$ contains a relatively
compact open subset $W_k\subset V_k$ such that $X=\cup_kW_k$.
Let $h_k$ denote the hermitian metric on $V_k$ induced by the euclidean metric
via its embedding in $\C^n$. We choose $C_1>0$ such that $h_k\le C_1h$
everywhere on each $W_k$
and $C_2>0$ such that $\omega\ge C_2
j_k^*\alpha$ on each $W_k$.
Then the claim (with $C=C_1C_2$) follows from the preceding local
observation.
\end{proof}

\section{The result}
We show:
\begin{theorem}
Let $X$ be a compact K\"ahler manifold. Let $f:\C^n\dasharrow X$ be a non-degenerate meromorphic map of order $\rho_f<2$.

Then $X$ is projective and rationally connected.
\end{theorem}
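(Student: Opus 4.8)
The plan is to prove the contrapositive form of the rational connectedness criterion recalled in Section~3: a compact K\"ahler $X$ fails to be rationally connected precisely when $\Omega^p_X$ carries a pseudoeffective invertible subsheaf $F$ for some $p$. So I assume such an $F\subset\Omega^p_X$ exists and aim to show $\rho_f\ge 2$, contradicting $\rho_f<2$; since rationally connected compact K\"ahler manifolds are projective, this proves both conclusions at once. Fix a positive $(1,1)$-form $\omega$ on $X$, let $h_0$ be the smooth Hermitian metric on $F$ induced by restriction of the $\omega$-metric on $\Omega^p_X$, and write the pseudoeffective metric as $h_F=h_0e^{-2\phi}$ with $\Theta_{h_F}\ge 0$. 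Because $\Theta_{h_0}$ is a smooth, hence bounded, form on the compact $X$, the weight $\phi$ is quasi-plurisubharmonic and therefore \emph{bounded above}, say $\phi\le M$. This elementary consequence of compactness is what will eventually tame the one dangerous term.

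Next I transport everything to $\C^n$. As $\C^n$ is Stein and contractible, $f^*F$ is holomorphically trivial; I choose a nowhere-vanishing holomorphic generator $\sigma_0$. Composing $f^*F\hookrightarrow f^*\Omega^p_X$ with $\Lambda^p(Df)$ sends $\sigma_0$ to a holomorphic $p$-form $f^*\eta$ on $\C^n$ (extending across the codimension $\ge 2$ indeterminacy locus as in Section~2), which is not identically zero by non-degeneracy. Two functions carry the relevant information: the plurisubharmonic function $\Psi:=-\log\|\sigma_0\|_{f^*h_F}$, psh because $dd^c\Psi=\tfrac12\,\Theta_{f^*h_F}=\tfrac12\,f^*\Theta_{h_F}\ge 0$ where $f$ is holomorphic, and $E:=\|f^*\eta\|_{euc}$, whose logarithm is psh as the log-norm of a holomorphic form.

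The heart of the matter is the pointwise estimate $\|f^*\eta(z)\|_{euc}\le C\,\|Df_z\|^p\,\|\eta(f(z))\|_{h_0}$, which follows from $\|\Lambda^p Df_z\|\le\binom{n}{p}^{1/2}\|Df_z\|^p$ together with the boundedness of the metric comparisons on the compact $X$. Writing $\|\eta(f(z))\|_{h_0}=\|\sigma_0(z)\|_{f^*h_0}=\|\sigma_0(z)\|_{f^*h_F}\,e^{f^*\phi(z)}$ and taking logarithms gives $p\log\|Df_z\|\ge \log E(z)+\Psi(z)-f^*\phi(z)-C'$. I then average over the sphere $\partial B_r$ against the rotation-invariant probability measure. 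By the sub-mean-value inequality for the psh functions $\Psi$ and $\log E$, centered at a generic base point (which I take to be $0$, chosen off the indeterminacy set, off the zero set of $f^*\eta$, and off the polar set of $h_F$), the first two averaged terms are bounded below by constants, while $\phi\le M$ bounds the last. Hence the spherical average of $\log\|Df\|$ is bounded below by a constant $c_3$, and by concavity of the logarithm the spherical average of $\|Df\|^2$ is bounded below by $e^{2c_3}>0$.

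From here the conclusion is routine. Integrating this uniform lower bound radially yields $\int_{B_t}\|Df\|^2\alpha^n\gtrsim t^{2n}$, so Proposition~\ref{prop-A} gives $\int_{B_t}f^*\omega\wedge\alpha^{n-1}\gtrsim t^{2n}$, whence $T_f(r)=\int_1^r t^{-(2n-1)}\big(\int_{B_t}f^*\omega\wedge\alpha^{n-1}\big)\,dt\gtrsim\int_1^r t\,dt\sim r^2$, forcing $\rho_f\ge 2$. I expect the main obstacle to be exactly this central estimate: arranging the reference and pseudoeffective metrics so that the pulled-back weight $f^*\phi$ enters with the sign that lets compactness ($\phi\le M$) control it, while $\Psi$ and $\log E$ are controlled from \emph{below} by the sub-mean-value property. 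Everything else---the trivialization of $f^*F$, the extension across the indeterminacy locus, and the final radial integration producing the exponent $2$---is comparatively mechanical, the value $2$ being precisely the ball-volume growth in $\C^n$ that already realizes $\rho_\tau=2$ for the universal cover $\tau:\C^g\to A$ of an abelian variety.
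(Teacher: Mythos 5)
Your proposal is correct and follows essentially the same route as the paper: the criterion of Section~3 produces a pseudoeffective invertible subsheaf $F\subset\Omega^p_X$, the bounded-above weight of the singular positively curved metric turns $p\log\|Df\|$ into a quantity bounded below by a plurisubharmonic function, and the sub-mean-value property together with Proposition~\ref{prop-A} forces $T_f(r)\gtrsim r^2$, hence $\rho_f\ge 2$. The only substantive difference is cosmetic: the paper works directly with $\log\|\beta\|_{g_0}$, the dual-metric norm of the bundle morphism $f^*F\to\Omega^p_{\C^n}$ (a single plurisubharmonic function, exponentiated and averaged over balls), rather than splitting it as $\log E+\Psi$ after trivializing $f^*F$ --- which also sidesteps your appeal to ``$\C^n$ is Stein and contractible'', not literally applicable since $f^*F$ a priori lives only on $\C^n\setminus I(f)$ (where nontrivial line bundles exist, e.g.\ on $\C^2\setminus\{0\}$); the triviality does hold, but only after extending the bundle across the codimension-two set $I(f)$.
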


\begin{proof}
Assume by contradiction that $X$ is not rationally connected.
There is then a
pseudoeffective invertible subsheaf $\cF\subset\Omega^p_X$
(for some $p\in\N$).
We fix a hermitian metric $h$ on $X$. The hermitian metric
on $T_X$ induces a hermitian metric on $\Omega_X^p$,
by abuse of language also denoted by $h$.
The injection of sheaves $\cF\hookrightarrow\Omega^p_X$ corresponds
to a non-zero vector bundle homomorphism $\xi_0:F\to \Omega^p_X$, where
$F$ is a pseudoeffective line bundle.
Let $g$ denote a smooth hermitian metric on $F$
with $g\le h|_F$ and let $g_0$ denote
a singular hermitian metric on $F$ such that $\Theta_{g_0}\ge 0$, i.e,
with positive curvature current.
Then there is an upper semicontinuous function $\phi\to\R$ such that
$g_0=e^{-\phi}g$. Since $\phi$ is upper semicontinuous, and
$X$ is compact, it is bounded from above: $M:=\sup_{x\in X}\phi(x)<\infty$.

The meromorphic map $f:\C^n\to X$ and the vector bundle
homomorphism $\xi_0:F\to \Omega^p_X$ induce vector bundle homomorphisms
\[
f^*F\stackrel\xi\longrightarrow f^*\Omega^p_X
\stackrel{Df^*}\longrightarrow \Omega^p_{\C^n}.
\]
on $\C^n\setminus I(f)$ (with $I(f)$ denoting the indeterminacy set
of the meromorphic map $f$.)
We are interested in a lower bound for $||Df||=||(Df)^*||$, calculated
with respect to 
the metric induced by $h$ resp.~the euclidean metric on
$f^*\Omega^p_X $ resp.~$\Omega^p_{\C^n}$. 
Let $\beta:=(Df^*)\circ\xi$.
Since we assumed
$g\le h|_F$, we have $||\beta||_g\le||Df||$ where
$||\beta||_g\le||Df||$ denotes the norm with respect to $g$ on $F$
and the euclidean metric on $\Omega^p_{\C^n}$.
Let $||\beta||_{g_0}$ denote the norm with respect to $g_0$ on $F$.

By using the standard trivialization of $\Omega^p_{\C^n}$, the bundle
morphism $\beta:f^F\to\Omega^p_{\C^n}$ corresponds to a vector valued
section on the dual bundle $f^*F*$.
Now $\Theta_{g_0}\ge 0$ implies that
$\log||s||_{g_0}$ is plurisubharmonic for every 
holomorphic section $s$ in $f^*F^*$.

Hence $\log||\beta||_{g_0}$ is a plurisubharmonic function on
$\C^n\setminus I(f)$ where $I(f)$ denotes the indeterminacy set
of the meromorphic map $f$.
Plurisubharmonic functions
extend through closed analytic subsets of codimension at least two.
Hence $\log||\beta||_{g_0}$ extends to a plurisubharmonic function
$\zeta_0$ defined on the whole $\C^n$.

We observe that $||\beta||_{g_0}=e^{\phi}||\beta||_{g}$, because
$g_0=e^{-\phi}g$.

By the definition of the constant $M$, we have
$||\beta||_{g_0} \le e^M||\beta||_{g}$,
implying 
\[
\zeta_0- M \le \log||\beta||_{g} \le \log ||Df||
\]

Define $\zeta=\exp(\zeta_0-M)$.
The plurisubharmonicity of $\zeta_0$
implies the plurisubharmonicity of $\zeta$.
Thus we obtain
the existence of a plurisubharmonic function $\zeta$ on $\C^n$
such that $||Df||\ge\zeta$.

Using proposition~\ref{prop-A} we may deduce that
\[
f^*\omega\wedge\alpha^{n-1}\ge\zeta\alpha^n.
\]
It follows that
\[
T_f(r;\omega)\ge
\int^r_1 \frac{dt}{t^{2n-1}}\int_{B_t}\zeta\alpha^n
\]
Since moving the origin, i.e., replacing $f$ by $f\circ\tau$ where $\tau$
denotes a translation, does not affect the order $\rho_f$, we may assume that
$c=\zeta(0,\ldots,0)>0$. Using the sub-mean value property of plurisubharmonic
functions it follows that
\[
T_f(r;\omega)\ge
c\int^r_1 \frac{dt}{t^{2n-1}}\int_{B_t}\alpha^n
=c\nu \int^r_1 \frac{dt}{t^{2n-1}}t^{2n}
=\frac{c\nu}{2} r^2+O(1)
\]
where $\nu$ denote the volume of the unit ball.
Therefore
\[
\rho_f=\limsup\frac{\log T_f(r)}{\log r}
\ge \limsup\frac{\log (r^2)}{\log r}=2.
\]
\end{proof}

\section{Non-K\"ahler manifolds}
Our result is not valid for non-K\"ahler manifolds. In fact,
there are Hopf surfaces $X$ admitting a non-degenerate holomorphic
map $f:\C^n\to X$ of order $\rho_f=1$
(see \cite{NW}). Of course, these Hopf surfaces
are non-K\"ahler and not rationally connected; they do not contain
any rational curve at all.

More precisely, in \cite{NW} the following is proved:
\begin{theorem}
Let $\lambda\in\C$ with $|\lambda|>1$ and let
$\sim$ denote the equivalence relation on $\C^2\setminus\{(0,0)\}$
given by 
\[
v\sim w \quad \iff \quad \exists k: v=\lambda^k w.
\]
Let $X=\C^2\setminus\{(0,0)\}/\sim$ and let $f:\C^2\to X$ be the map induced
by $(z,w)\mapsto (z,1+zw)$.

Then $\rho_f=1$.
\end{theorem}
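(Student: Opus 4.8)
The plan is to compute the characteristic function $T_f(r)$ directly from a conveniently chosen positive $(1,1)$-form on $X$ and to read off its growth order. First I would produce such a form intrinsically: the Hermitian form $\eta = \frac{i}{\|z\|^2}\sum_j dz_j\wedge d\bar z_j$ on $\C^2\setminus\{0\}$ is invariant under $z\mapsto\lambda z$ (the form $\sum_j dz_j\wedge d\bar z_j$ scales by $|\lambda|^2$ while the factor $\|z\|^{-2}$ scales by $|\lambda|^{-2}$), so it descends to a smooth positive $(1,1)$-form on the compact manifold $X$. Since $\rho_f$ is independent of the chosen positive form, it suffices to work with $\eta$. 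Writing $\tilde f=(g_1,g_2)=(z,1+zw)$ for the lift of $f$ to the universal cover $\C^2\setminus\{0\}$, a direct computation gives
\[
f^*\eta\wedge\alpha = c\,\frac{\|Dg\|^2}{\|\tilde f\|^2}\,\alpha^2,\qquad \|Dg\|^2 = 1+|z|^2+|w|^2,\quad \|\tilde f\|^2=|z|^2+|1+zw|^2,
\]
for a positive constant $c$. Thus the whole problem reduces to determining the polynomial growth order in $t$ of $Q(t):=\int_{B_t}f^*\eta\wedge\alpha$, since $T_f(r)=\int_1^r t^{-3}Q(t)\,dt$ and $\rho_f=\limsup_{r}\frac{\log T_f(r)}{\log r}$.

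For the lower bound on $\rho_f$ I would isolate the locus where the density is large, namely the ``resonance'' hypersurface $R=\{1+zw=0\}$ together with the region $z\to 0$, where $\|\tilde f\|^2$ is small while the numerator stays bounded below. Parametrising a tube around $R$ by $\delta=1+zw$ (so that $dV(w)=|z|^{-2}dV(\delta)$) turns the inner integral into an elementary $\int dV(\delta)/(|z|^2+|\delta|^2)$, and integrating the resulting contribution over the admissible range of $|z|$ produces the leading growth of $Q(t)$. Matching this against $T_f(r)=\int_1^r t^{-3}Q(t)\,dt$ yields the lower bound for $\rho_f$.

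For the upper bound I would estimate $Q(t)$ globally. Away from the resonance tube one has $\|\tilde f\|^2\asymp|zw|^2$ for $|zw|$ large, so the density decays like $\|z\|^{-2}$ and contributes only lower-order growth; the dangerous contributions are confined to the resonance tube and to the coordinate region $\{|z|\lesssim 1\}$. Equivalently, one may use Crofton's formula to average over complex lines $L$ and reduce to the one-variable first main theorem: along a generic line $\tilde f|_L$ is a pair of polynomials of degrees $1$ and $2$, and the counting of its intersections with the fibration $[z:1+zw]$ controls $T_{f|L}(r)$. Combining the two bounds pins down the order.

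The main obstacle is the sharp two-sided estimate of $Q(t)$: the competing contributions from the generic region (decay $\|z\|^{-2}$), from the resonance tube $\{1+zw\approx 0\}$, and from the small-$z$ region must be weighed against one another, since they enter $T_f(r)$ with different logarithmic factors. Extracting the exact power of $t$ — rather than an answer off by a logarithm or by a full power — is the delicate point, and is precisely what forces one to track how often the image of $f$ winds around the elliptic fibres $\C^*/\langle\lambda\rangle$ of the bundle $X\to\P^1$.
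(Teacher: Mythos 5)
First, a remark on the comparison itself: the paper does not actually prove this statement — it explicitly quotes it from \cite{NW} ("in \cite{NW} the following is proved"). So there is no internal proof to match your argument against; your proposal has to stand on its own.

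Your setup is fine: $\eta=\frac{i}{\|z\|^2}\sum_j dz_j\wedge d\bar z_j$ is indeed a smooth positive $(1,1)$-form descending to $X$, the identity $f^*\eta\wedge\alpha=c\,\frac{\|Dg\|^2}{\|\tilde f\|^2}\,\alpha^2$ with $\|Dg\|^2=1+|z|^2+|w|^2$ and $\|\tilde f\|^2=|z|^2+|1+zw|^2$ is correct, and the reduction to the growth of $Q(t)=\int_{B_t}f^*\eta\wedge\alpha$ is the right frame. The problem is that everything after that is deferred: you never state, let alone prove, what the growth of $Q(t)$ actually is, and you yourself flag the extraction of "the exact power of $t$" as the open delicate point. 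Since the theorem \emph{is} that exact power (one needs $T_f(r)\asymp r$, i.e.\ $Q(t)\asymp t^3$, for $\rho_f=1$), neither the lower bound $\rho_f\ge 1$ nor the upper bound $\rho_f\le 1$ has been established. This is not a matter of bookkeeping constants; it is the entire content of the statement.

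More seriously, the mechanism you propose for the lower bound does not deliver the required growth when one carries it out. In your tube coordinates $\delta=1+zw$, the inner integral is $\int_{|\delta|\le 1/2}\frac{dV(\delta)}{|z|^2+|\delta|^2}\asymp\log\frac1{|z|}$, the numerator contributes $\asymp |z|^{-2}$ and the Jacobian another $|z|^{-2}$, and integrating $|z|^{-4}\log(1/|z|)$ over $1/t\lesssim|z|\lesssim 1$ gives a resonance-tube contribution of order $t^2\log t$ to $Q(t)$. The small-$z$ region off the tube gives $O(t^2)$, and in the generic region the density is $\asymp |z|^{-2}+|w|^{-2}$ (not $\|(z,w)\|^{-2}$ as you write), again contributing $O(t^2\log t)$. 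Feeding $Q(t)=O(t^2\log t)$ into $T_f(r)=\int_1^r t^{-3}Q(t)\,dt$ yields only $O((\log r)^2)$, i.e.\ order $0$ — far short of the claimed $\rho_f=1$. The Crofton fallback fares no better as sketched: on a generic affine line the lift is a pair of polynomials, so its image crosses only $O(\log R)$ fundamental domains of the $\Z$-action on $\C^2\setminus\{0\}$, and the naive estimate again gives $T_{f|L}(r)=O((\log r)^2)$. So either there is a source of growth that your decomposition into "generic region / resonance tube / small $z$" completely misses, or the density computation with respect to $\eta$ must be organized in an essentially different way; in either case the argument as outlined does not prove $\rho_f=1$, and indeed, taken at face value, it points to the wrong answer. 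You need to locate precisely where $T_f(r)\asymp r$ comes from before this can be called a proof.
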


This result has been generalized by T.~Amemiya to the class of Hopf
surfaces defined by an equivalence $(z,w)\sim(\lambda^kz,\mu^kw)$ ($k\in\Z$)
where $\lambda$ may be different from $\mu$ (but $|\lambda|,|\mu|>1$).

\section{Questions}

The following questions are not expected to have necessarily positive answers.

Let $X$ be an $n$-dimensional compact K\"ahler manifold, $f:\C^m\dasharrow X$ meromorphic non-degenerate. 

1. Is $X$ unirational if $\rho_f=0$? Is $X$ unirational if $\rho(X)=0$?

(It should be remarked that $\rho_f=0$ for every algebraic map,
but the condition $\rho_f$ is substantially weaker than algebraicity, as seen by appropriate power series in one variable.)

2. If there exists such an $f:\C^m\dasharrow X$, can it be chosen so that $\rho_f<+\infty$? In other words: if there exists an $f$ as above, is $\rho(X)<+\infty$?

3. If $\rho(X)<+\infty$, does there exist some $f:\C^m\dasharrow X$ with $\rho_f=\rho(X)$?

4. If $X$ is rationally connected, does there exist a non-degenerate meromorphic map $f: \C^n\dasharrow X$? Is then $\rho(X)<+\infty$?

5. If $X$ is RC, and if there exists a non-degenerate $f:\C^m\dasharrow X$, is $X$ unirational? (ie: can $f$ be chosen algebraic?). A positive answer would imply that there exists no $X$ with $\rho(X)\in ]0,2[$. 

The questions 3 and 4 were raised for $n=3$ in \cite{Ca04}, question 9.5.

6. Is the estimate $\rho(X)\geq 2(1-\frac{\nu}{n})^{-1}$ in \cite{CP} optimal if $K_X$ is pseudoeffective with $\nu(X)=:\nu$? In other words: does there exists $X_n$ with $\nu(X)=\nu$ (or better: with $\kappa(X)=\nu$) and with $\rho(X)\geq 2(1-\frac{\nu}{n})^{-1}$ for any $n>0$ and $\nu\in \{0,1,\dots,n\}$?

\end{document}